\newtheorem{theorem}{\bf Theorem}[section]
\newtheorem{lem}[theorem]{\bf Lemma}
\newtheorem{coro}[theorem]{\bf Corollary}
\newtheorem{rem}[theorem]{\bf Remark}
\newcommand\diff{\,\mathrm{d}}
\def \bN {\Bbb N}
\def \bR {\Bbb R}
\def \bN {\Bbb N}
\def \and {\, \mbox{\rm and}\, }
\def \l {\left}
\def \r {\right}
\def \bR {\Bbb R}
\def \bN {\Bbb N}
\def \l {\left}
\def \r {\right}
\newcommand{\Rmnum}[1]{\expandafter\@slowromancap\romannumeral #1@}
\begin{document}

\title{\bf The sharp constant for  truncated Hardy-Littlewood maximal inequality}
\author[a]{Jia Wu}
\author[a]{Shao Liu\thanks{Corresponding author, E-mail: {liushao19@mails.ucas.ac.cn}}}
\author[b]{Mingquan Wei}
\author[a]{Dunyan Yan}
\affil[a]{\it \small School of Mathematical Sciences, University of Chinese Academy of Sciences, Beijing, {\rm 100049}, China}
\affil[b]{\it \small School of Mathematics and stastics, Xinyang Normal University, Xinyang, {\rm 464000}, China}
\date{}

\maketitle

\vspace{-1em}
\begin{abstract}
\noindent This paper focuses on the operator norm of the truncated  Hardy-Littlewood maximal operator $M^b_a$ and the strong truncated Hardy-Littlewood maximal operator $\tilde{M}^{\boldsymbol{b}}_{\boldsymbol{a}}$, respectively. We first present the $L^1$-norm of $M^b_a$, and then the $L^1$-norm of $\tilde{M}^{\boldsymbol{b}}_{\boldsymbol{a}}$ is given. Our study may have some enlightening significance for the research on sharp constant for the classical Hardy-Littlewood maximal inequality.

\vspace{0.3em}
\noindent {{\bf \small Key words:} sharp constant;\quad truncated maximal operator;\quad strong maximal operator}

\vspace{0.3em}
\noindent {{\bf AMS Subject Classification (2010):} 42B25}
\end{abstract}

\vspace*{0.5mm}

\section{Introduction}
\label{sec1}

The purpose of this paper is to establish the $L^1$-norm, respectively, of  the truncated Hardy-Littlewood maximal operator and the strong truncated Hardy-Littlewood
maximal operator.

We begin with definitions of the maximal operators studied here. Given two parameters $a$ and $b$ satisfying $0<a\leq b<\infty$, for a locally integrable function $f$ on $\bR^n$,  the truncated Hardy-Littlewood maximal operator $M_a^b$ is defined by
\begin{equation}\label{001}
M_a^bf(x)=\sup_{a<r<b}\frac{1}{|B(x,r)|}\int_{B(x,r)}|f(y)|\diff{y},
\end{equation}
where $B(x,r)$ denotes the ball centered at $x\in \bR^n$ with the radius $r$.

Suppose that $\boldsymbol{a}=(a_1,a_2,\dots,a_n)$ and $\boldsymbol{b}=(b_1,b_2,\dots,b_n)$ are two vectors in $\mathbb{R}^n$. We call ${\bf 0}<\boldsymbol{a}\leq\boldsymbol{b}<\boldsymbol{\infty}$ if $0<a_i\leq b_i<\infty$ for each $i=1,2,\dots,n$. For ${\bf 0}<\boldsymbol{a}\leq \boldsymbol{b}<\boldsymbol{\infty}$, we further give the definition of  the strong truncated Hardy-Littlewood  maximal operator ${M}^{\boldsymbol{b}}_{\boldsymbol{a}}$, that is,
\begin{equation}\label{001*}
{M}^{\boldsymbol{b}}_{\boldsymbol{a}}f(x)=\sup_{R\in \mathscr{R}(x)}\frac {1}{|R|}\int_{R}|f(y)|\diff{y},
\end{equation}
where $\mathscr{R}(x)$ is the set of all rectangles $R$ in $\bR^n$ which satisfies the following three properties:
\vspace{-0.3em}
\begin{enumerate}
\setlength{\itemsep}{1.5pt}
\setlength{\parsep}{0pt}
\setlength{\parskip}{0pt}
\setlength{\topsep}{1ex}
\item[(\Rmnum{1})] $R$ is centered at $x\in\mathbb{R}^n$;
\item[(\Rmnum{2})] All sides of $R$ are parallel to axes;
\item[(\Rmnum{3})] Denote by $\ell_i$ ($1\leq i\leq n$) the length of each side of $R$, then $a_i<\frac {\ell_i}{2}<b_i$.
\end{enumerate}

\vspace{-0.3em}
Let $\bR^n=\bR^{n_1}\times \bR^{n_2}\times\cdots\times  \bR^{n_k}$, where  $1\leq n_i\leq n$ for $1\leq i\leq k$. Suppose that $\boldsymbol{a}$ and $\boldsymbol{b}$ are two vectors in $\bR^k$ such that ${\bf 0}<\boldsymbol{a}\leq\boldsymbol{b}<\boldsymbol{\infty}$. For any $x\in\bR^n$, we have $x=(x_1,\dots,x_k)$ where $x_i\in\bR^{n_i}$ for $1\leq i\leq k$. Now we are in the position to give the definition of the general strong truncated Hardy-Littlewood  maximal operator $\tilde{M}^{\boldsymbol{b}}_{\boldsymbol{a}}$, namely,
\begin{equation}
	\tilde{M}^{\boldsymbol{b}}_{\boldsymbol{a}}f(x)=\sup_{a_i\leq r_i\leq b_i\atop 1\leq i\leq k}\frac{1}{\prod_{i=1}^{k}|B(x_i,r_i)|}\int_{\prod_{i=1}^kB(x_i,r_i)}|f(y)|\diff{y},
\end{equation}
where $\prod_{i=1}^kB(x_i,r_i)=B(x_1,r_1)\times \cdots \times B(x_k,r_k)$ and $B(x_i,r_i)$ denotes the ball centered at $x_i\in \bR^{n_i}$ with the radius $r_i$.

Clearly, if each $n_i=1$, then $\tilde{M}^{\boldsymbol{b}}_{\boldsymbol{a}}=M_a^b$. If $k=1$, then $\tilde{M}^{\boldsymbol{b}}_{\boldsymbol{a}}=M^{\boldsymbol{b}}_{\boldsymbol{a}}$. Hence, $\tilde{M}^{\boldsymbol{b}}_{\boldsymbol{a}}$ is exactly an extension of both operators $M_a^b$ and ${M}^{\boldsymbol{b}}_{\boldsymbol{a}}$.

Recall that for a locally integrable function $f$, the classical Hardy-Littlewood maximal operator $M$ and the classical strong Hardy-Littlewood  maximal
operator $\hat{M}$ are defined, respectively, by
\begin{equation}
\label{def000003}
Mf(x)=\sup_{r>0}\frac{1}{|B(x,r)|}\int_{B(x,r)}|f(y)|\diff{y},
\end{equation}
and
\begin{equation}
\tilde{M}f(x)=\sup_{R(x)}\frac{1}{|R(x)|}\int_{R(x)}|f(y)|\diff{y},
\end{equation}
where the supremum is taken over all rectangles $R(x)$ centered at $x\in\mathbb{R}^n$ with sides parallel to axes.

The classical (strong) Hardy-Littlewood  maximal operators are fundamental tools to study harmonic analysis, potential theory, and the theory of partial differential equations (see \cite{Adams1996,loukas2014classical}). Meanwhile, how to compute the operator norm of the classical Hardy-Littlewood maximal operator is a basic problem. There are fruitful results on the $L^p(\bR^n)\to L^p(\bR^n)$-norm of $M$ ($1<p<\infty$), and the sharp weak $(1,1)$ constant for $M$; one may see \cite{aldaz1998remarks,2009The,2013On,grafakos1997best,melas2002centered,melas2003best,2016Best} and the references there for more results and details.

It is well-known that the truncated operators have many crucial properties. For instance, as shown in \cite{Yan}, $L^p$-boundedness of the truncated operator and the corresponding oscillatory operator is equivalent. Hence, in order to facilitate and simplify the research on the classical maximal operators, one may study the truncated maximal operators and find their relations with the classical ones.

 Wei et al. \cite{wei2016note} and Zhang et al. \cite{zhang2020equivalence} first introduce the operator $M_0^{\alpha}$ ($\alpha >0$) which is defined by 
 \begin{gather}
 \label{def00002}
 M_0^{\alpha}f(x)=\sup_{0<r<\alpha}\frac{1}{|B(x,r)|}\int_{B(x,r)}|f(y)|\diff{y}.
 \end{gather}
 Obviously, $M_0^{\alpha}$ and $M$ can both be viewed as {\bf limit} or endpoint cases of $M_{a}^{b}$. 
As long as $0<a'\leq a$ and $ b\le b'\le\alpha<\infty$, in the pointwise sense, it can be deduced immediately from the definition \eqref{001}, \eqref{def000003} and \eqref{def00002} that
\begin{align*}
M_a^bf(x)&\leq M_{a'}^{b'}f(x)\leq M_0^{\alpha}f(x)\leq Mf(x)
\end{align*}
holds for any $x\in\bR^n$. Therefore it follows that
\begin{equation}
\label{001*}
\|M_a^b\|_{L^p(\bR^n)\to L^p(\bR^n)}\leq \|M_{a'}^{b'}\|_{L^p(\bR^n)\to L^p(\bR^n)}\leq \|M_{0}^{\alpha}\|_{L^p(\bR^n)\to L^p(\bR^n)}\leq\|M\|_{L^p(\bR^n)\to L^p(\bR^n)}.
\end{equation}
The same is true if we substitue the $L^p(\bR^n)\to L^p(\bR^n)$-norm in \eqref{001*} with the weak $(1,1)$-norm.  

It is noteworthy that Wei et al. \cite{wei2016note} shows the last inequality in \eqref{001*} is, in fact, an identity, that is, for $1<p<\infty$,
\begin{gather}
\label{wei001}
\l\|M_0^\alpha\r\|_{L^p(\bR^n)\to L^p(\bR^n)}=\l\|M\r\|_{L^p(\bR^n)\to L^p(\bR^n)},
\end{gather}
and 
\begin{gather}
\label{wei002}
\l\|M_0^\alpha\r\|_{L^1(\bR^n)\to L^{1,\infty}(\bR^n)}=\l\|M\r\|_{L^1(\bR^n)\to L^{1,\infty}(\bR^n)}.
\end{gather}
Identities \eqref{wei001} and \eqref{wei002} actually demonstrate that there is no difference between operators $M_0^{\alpha}$ and $M$ in the $L^p(\bR^n)\to L^p(\bR^n)$-norms  and weak $(1,1)$-norms, and this is essentially because $\frac {\alpha}{0}=\infty=\frac {\infty}{0}$.
  %if $b/a=d/c$ by using the dilation. 
 %Then applying this fact to \eqref{001*}, one can show that $\|M_a^b\|_{L^p(\bR^n)\to L^p(\bR^n)}$ is precisely an increasing function with respect to the parameter $\theta:=b/a$.
 Motivated by these works, in this papar, we divert out attention to the operator $M_a^b$ for $0<a\le b<\infty$, and its general form $\tilde{M}^{\boldsymbol{b}}_{\boldsymbol{a}}$ for ${\bf 0}<\boldsymbol{a}\leq \boldsymbol{b}<\boldsymbol{\infty}$. Note that the essence of our discussion is using  the finity to deduce the infinity, which is the base of the calculus.
%first deduce the explicit expression of the $L^1$-norm of $M_a^b$, then extend this result to
%${M}^{\boldsymbol{b}}_{\boldsymbol{a}}$,  and finally prove that a similar conclusion holds for the general operator $\tilde{M}^{\boldsymbol{b}}_{\boldsymbol{a}}$.

Now we formulate our main theorems as follows.
\begin{theorem}\label{main-2}
		For $0<a\le b<\infty$, we have that
\begin{equation}\label{G-4-1}
	\l\|M_a^b\r\|_{L^1(\bR^n)\to L^1(\bR^n)}=1+n\ln\frac{b}{a}.
\end{equation}
\end{theorem}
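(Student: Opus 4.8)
The plan is to establish the two matching bounds $\|M_a^b\|_{L^1(\bR^n)\to L^1(\bR^n)}\le 1+n\ln\frac ba$ and the reverse inequality. Since $M_a^bf=M_a^b|f|$ and $\|f\|_1=\||f|\|_1$, I may assume $f\ge0$ throughout. Writing $v_n=|B(0,1)|$ and $A_rf(x)=\frac1{v_nr^n}\int_{B(x,r)}f$, so that $M_a^bf=\sup_{a<r<b}A_rf$, the first step toward the upper bound is to linearize the supremum: for any measurable radius function $\rho\colon\bR^n\to[a,b]$, Tonelli's theorem gives
\[
\int_{\bR^n}\frac1{v_n\rho(x)^n}\int_{B(x,\rho(x))}f(y)\diff y\diff x=\int_{\bR^n}f(y)\,I_\rho(y)\diff y,\qquad I_\rho(y)=\int_{\{x:\,|x-y|<\rho(x)\}}\frac{\diff x}{v_n\rho(x)^n}.
\]

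Next I would bound $I_\rho(y)$ uniformly in $y$, which is the heart of the matter. On the region of integration the constraints $|x-y|<\rho(x)$ and $\rho(x)\ge a$ force $\rho(x)\ge\max(a,|x-y|)$, while $|x-y|<\rho(x)\le b$ confines $x$ to $B(y,b)$; hence
\[
I_\rho(y)\le\int_{B(y,b)}\frac{\diff x}{v_n\max(a,|x-y|)^n}=\int_0^b\frac{n\,t^{n-1}}{\max(a,t)^n}\diff t=1+n\ln\frac ba,
\]
after passing to polar coordinates (with surface measure $n v_n t^{n-1}$) and splitting the integral at $t=a$. This is exactly the claimed constant and it is independent of $y$. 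To pass from a single radius function to the genuine supremum, I would enumerate the rationals $\{q_j\}$ in $(a,b)$ and set $g_N=\max_{1\le j\le N}A_{q_j}f$; since $r\mapsto A_rf(x)$ is continuous on $(a,b)$ (the integral $\int_{B(x,r)}f$ is absolutely continuous in $r$), we have $g_N\uparrow M_a^bf$ pointwise. Each $g_N$ equals $A_{\rho_N}f$ for the measurable radius $\rho_N$ realizing the finite maximum, so the displayed estimate yields $\int g_N\le(1+n\ln\frac ba)\|f\|_1$, and monotone convergence gives the upper bound.

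For sharpness I would test against the approximate identity $f_\varepsilon=\mathbf{1}_{B(0,\varepsilon)}/|B(0,\varepsilon)|$, which has $\|f_\varepsilon\|_1=1$. For $|x|=s$ with $\max(a,s+\varepsilon)<b$, any radius $r$ slightly exceeding $\max(a,s+\varepsilon)$ makes $B(x,r)$ swallow the support of $f_\varepsilon$, so $M_a^bf_\varepsilon(x)\ge v_n^{-1}\max(a,s+\varepsilon)^{-n}$. Integrating this lower bound in polar coordinates gives $\int_0^{b-\varepsilon}\frac{n s^{n-1}}{\max(a,s+\varepsilon)^n}\diff s$, which tends to $1+n\ln\frac ba$ as $\varepsilon\to0$ (the partial-overlap regions only increase $M_a^bf_\varepsilon$ and may be discarded in a lower bound). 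Hence $\|M_a^b\|_{L^1\to L^1}\ge1+n\ln\frac ba$, matching the upper bound.

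The step I expect to be most delicate is not the computation but the rigorous handling of the supremum in the upper bound: the linearization, the measurable choice of $\rho_N$, and the interchange of supremum and integral via monotone convergence. Once the problem is reduced to the single uniform estimate for $I_\rho(y)$, everything else is the routine Fubini--Tonelli swap and an elementary radial integral. A minor point is that the supremum is over the open interval $(a,b)$, which is why both bounds approach the optimal radius from above; the degenerate case $a=b$ collapses to a single averaging operator whose $L^1$ norm is exactly $1$, in agreement with the formula.
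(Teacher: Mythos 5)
Your proof is correct, and your upper bound argument is genuinely different from (and leaner than) the paper's. The paper proves the upper bound by a chain of reductions: density of $C_c(\bR^n)$ in $L^1$, domination of $\varphi$ by a simple function, a "mass concentration" lemma showing that summing all the coefficients onto a single small cube can only increase $\|M_a^bf\|_1$, and finally a pointwise control function $u$ for $M_a^bg$ when $g$ is a multiple of the indicator of a tiny cube, whose $L^1$-norm is computed with explicit error terms $A(s)$ that vanish as the cube shrinks. You instead linearize the supremum with a measurable radius function $\rho$, apply Tonelli, and reduce everything to the single uniform estimate
\[
I_\rho(y)\le\int_0^b\frac{n\,t^{n-1}}{\max(a,t)^n}\diff t=1+n\ln\frac ba,
\]
which holds for arbitrary $f\in L^1$ with no density or reduction lemmas, no auxiliary simple functions, and no limit in an extra parameter $s$; the passage from finitely many rational radii to the full supremum via monotone convergence is handled cleanly and the measurable selection of $\rho_N$ is unproblematic. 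Your lower bound is essentially identical to the paper's (normalized indicators of shrinking balls, with the same radial integral in the limit), so the two proofs converge there. What the paper's route buys is a concrete extremal picture --- the control function $u$ visibly morphs into the extremizing sequence $w$ --- and reduction machinery (Lemmas 2.2 and 2.3) that it reuses verbatim for the strong and general truncated operators in Theorems 1.2 and 1.3; what your route buys is brevity and the fact that the constant $1+n\ln\frac ba$ appears in one transparent radial integral rather than as the limit of a two-parameter estimate. The only point worth tightening is the justification that the $\varepsilon\to0$ limit in your lower bound may be taken inside the integral (dominated convergence with majorant $n s^{n-1}\max(a,s)^{-n}$ suffices), but that is routine.
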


\begin{theorem}\label{main-3}
For ${\bf 0}<\boldsymbol{a}\leq \boldsymbol{b}<\boldsymbol{\infty}$, we have that
	\begin{equation}\label{G-4-1*}
		\l\|M_{\boldsymbol{a}}^{\boldsymbol{b}}\r\|_{L^1(\bR^n)\to L^1(\bR^n)}=\prod\limits _{i=1}^n\left(1+\ln\frac {b_i}{a_i}\right).
	\end{equation}
\end{theorem}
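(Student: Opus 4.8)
The plan is to prove the identity by establishing the two matching inequalities, following the template of Theorem~\ref{main-2}: an upper bound obtained by dominating $M_{\boldsymbol{a}}^{\boldsymbol{b}}$ pointwise by convolution against an explicit kernel, and a lower bound obtained by testing against an approximate identity. The feature that distinguishes this from Theorem~\ref{main-2} is that the admissible rectangles $R\in\mathscr{R}(x)$ split as products of one-dimensional intervals, so every object below factorizes across coordinates, which is what yields a product rather than the additive expression of Theorem~\ref{main-2}.

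For the upper bound I would reduce to $f\ge 0$ and write each $R\in\mathscr{R}(x)$ as $R=\prod_{i=1}^n(x_i-r_i,x_i+r_i)$ with $a_i<r_i<b_i$, so $|R|=\prod_{i=1}^n 2r_i$. Introduce the one-dimensional weights
$$\psi_i(t)=\frac{1}{2a_i}\mathbbm{1}_{\{|t|\le a_i\}}+\frac{1}{2|t|}\mathbbm{1}_{\{a_i<|t|<b_i\}},\qquad t\in\bR,$$
and the product kernel $K(x)=\prod_{i=1}^n\psi_i(x_i)$. The key pointwise estimate is that, for every admissible $\boldsymbol r$, the normalized indicator of the rectangle is dominated by $K$:
$$\frac{1}{\prod_{i=1}^n 2r_i}\prod_{i=1}^n\mathbbm{1}_{\{|x_i-y_i|<r_i\}}\le\prod_{i=1}^n\psi_i(x_i-y_i)=K(x-y),$$
which holds because $\frac{1}{2r_i}\mathbbm{1}_{\{|x_i-y_i|<r_i\}}\le\psi_i(x_i-y_i)$ in each coordinate whenever $a_i<r_i<b_i$, and all factors are nonnegative. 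Integrating against $f$ and taking the supremum over $\boldsymbol r$ gives $M_{\boldsymbol{a}}^{\boldsymbol{b}}f\le K*f$ pointwise, whence Young's inequality yields $\|M_{\boldsymbol{a}}^{\boldsymbol{b}}f\|_{L^1}\le\|K\|_{L^1}\|f\|_{L^1}$. By Tonelli the kernel norm factorizes, $\|K\|_{L^1(\bR^n)}=\prod_{i=1}^n\int_{\bR}\psi_i(t)\diff t=\prod_{i=1}^n\bigl(1+\ln\frac{b_i}{a_i}\bigr)$, the one-dimensional integral being the elementary computation already recorded in Theorem~\ref{main-2} with $n=1$. (Alternatively, one may bound $M_{\boldsymbol{a}}^{\boldsymbol{b}}$ by the composition of the $n$ one-dimensional truncated operators and invoke Theorem~\ref{main-2} coordinatewise via Fubini.)

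For the lower bound I would test with a unit-mass bump $f_\varepsilon=(2\varepsilon)^{-n}\mathbbm{1}_{Q_\varepsilon}$, $Q_\varepsilon=[-\varepsilon,\varepsilon]^n$. A rectangle centered at $x$ with half-sides $r_i$ contains $Q_\varepsilon$ precisely when $r_i>|x_i|+\varepsilon$ for every $i$, in which case its $f_\varepsilon$-average equals $\prod_{i=1}^n(2r_i)^{-1}$. Optimizing each coordinate separately over $a_i<r_i<b_i$ gives $M_{\boldsymbol{a}}^{\boldsymbol{b}}f_\varepsilon(x)\ge\prod_{i=1}^n\psi_i^{\varepsilon}(x_i)$, where $\psi_i^\varepsilon(x_i)=\bigl(2\max(a_i,|x_i|+\varepsilon)\bigr)^{-1}\mathbbm{1}_{\{|x_i|<b_i-\varepsilon\}}$. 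Integrating, using Tonelli, and letting $\varepsilon\to0$ by dominated convergence in each factor recovers $\prod_{i=1}^n\bigl(1+\ln\frac{b_i}{a_i}\bigr)$, so $\|M_{\boldsymbol{a}}^{\boldsymbol{b}}\|_{L^1\to L^1}\ge\prod_{i=1}^n\bigl(1+\ln\frac{b_i}{a_i}\bigr)$. Combined with the upper bound, this gives \eqref{G-4-1*}.

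The step I expect to demand the most care is the lower bound, since $M_{\boldsymbol{a}}^{\boldsymbol{b}}$ is only sublinear and the true extremizer is a point mass lying outside $L^1$, forcing one to argue with genuine $L^1$ test functions and control the error uniformly. The containment condition $r_i>|x_i|+\varepsilon$ couples all coordinates at once — the bound $\prod_i\psi_i^\varepsilon(x_i)$ is nonzero only on the common region $\{|x_i|<b_i-\varepsilon\ \text{for all } i\}$ — so the factorization of $\int M_{\boldsymbol{a}}^{\boldsymbol{b}}f_\varepsilon$ and the passage $\varepsilon\to0$ must be justified there rather than coordinate by coordinate. By contrast, the upper bound is a faithful tensorization of the $n=1$ argument and should be routine once $K$ is identified.
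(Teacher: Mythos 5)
Your proof is correct, and the upper bound follows a genuinely different route from the paper's. The paper establishes this theorem as Part~1 of the proof of Theorem~\ref{main-4} (the case $k=n$, written out only for $n=2$): it first reduces to simple functions concentrated on a single small cube via Lemma~\ref{L2.2} and Lemma~\ref{L2.3}, then dominates $M_{\boldsymbol{a}}^{\boldsymbol{b}}g$ by an explicit piecewise control function $u$ on the $2^n+1$ regions cut out by the two rectangles $\prod_i[-a_i-s,a_i+s]$ and $\prod_i[-b_i-s,b_i+s]$, computes $\|u\|_1$ region by region, and finally lets $s\to0$ and $\varepsilon\to0$. Your observation that $\frac{1}{2r_i}\chi_{\{|t|<r_i\}}\le\psi_i(t)$ uniformly in $a_i<r_i<b_i$, hence $M_{\boldsymbol{a}}^{\boldsymbol{b}}f\le K*|f|$ with $K=\prod_i\psi_i$, collapses all of that into one application of Young's inequality: no density argument, no rearrangement onto a single cube, no case analysis, no limiting parameter, and it handles all $n$ uniformly instead of the representative case $n=2$. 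What the paper's approach buys in exchange is a family of near-extremal configurations made explicit on the upper-bound side as well, but that information is already supplied by the lower bound. Your lower bound is essentially the paper's (they test with $f_m=\chi_{B(0,1/m)}/(V_nm^{-n})$ and a piecewise minorant $w$; your normalized cube $f_\varepsilon$ factorizes more cleanly against rectangles), and the coupling you worry about is harmless: the bound $M_{\boldsymbol{a}}^{\boldsymbol{b}}f_\varepsilon(x)\ge\prod_i\psi_i^\varepsilon(x_i)$ holds trivially wherever some factor vanishes, so Tonelli applies on all of $\bR^n$ and each one-dimensional factor equals $\frac{a_i-\varepsilon}{a_i}+\ln\frac{b_i}{a_i}$ exactly, with no dominated-convergence step needed. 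One pedantic remark affecting both your argument and the paper's: when $a_i=b_i$ for some $i$, the admissible range of $r_i$ in the definition of $M_{\boldsymbol{a}}^{\boldsymbol{b}}$ is empty under the strict inequalities of property (III), so the case $\boldsymbol{a}\le\boldsymbol{b}$ with equality in some coordinate requires a closed range of radii to make sense; this is a defect of the statement, not of your proof.
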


\begin{theorem}\label{main-4}
Let $n=n_1+n_2 \cdots+n_k$ for $ n_i\ge 1$ with $1\leq i\leq k$. Suppose that $\boldsymbol{a}$ and $\boldsymbol{b}$ are two vectors in $\bR^k$ such that $\bf{0}<\boldsymbol{a}\leq\boldsymbol{b}<\boldsymbol{\infty}$.
We have that
	\begin{equation}\label{G-4-1**}
		\l\|\tilde{M}_{\boldsymbol{a}}^{\boldsymbol{b}}\r\|_{L^1(\bR^n)\to L^1(\bR^n)}=\prod\limits _{i=1}^k\left(1+n_i\ln\frac {b_i}{a_i}\right).
	\end{equation}
\end{theorem}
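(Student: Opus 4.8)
The plan is to exploit the product structure of $\tilde{M}_{\boldsymbol{a}}^{\boldsymbol{b}}$ across the blocks $\bR^{n_1},\dots,\bR^{n_k}$ and to reduce everything to Theorem~\ref{main-2} on each factor. For $1\le i\le k$ let $M^{(i)}$ be the truncated maximal operator $M_{a_i}^{b_i}$ acting on the single block variable $x_i\in\bR^{n_i}$; the closed range $a_i\le r_i\le b_i$ and the open range $a_i<r_i<b_i$ give the same supremum because the average $\frac{1}{|B(x_i,r_i)|}\int_{B(x_i,r_i)}|f|$ is continuous in $r_i$, so Theorem~\ref{main-2} in dimension $n_i$ gives $\l\|M^{(i)}\r\|_{L^1(\bR^{n_i})\to L^1(\bR^{n_i})}=1+n_i\ln\frac{b_i}{a_i}$. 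Since $\tilde{M}_{\boldsymbol{a}}^{\boldsymbol{b}}$ depends only on $|f|$, it suffices to treat $f\ge 0$. The elementary fact driving the proof is that the averaging operator factorizes: writing $A_{r_i}^{(i)}$ for the average over $B(x_i,r_i)$ in the $i$-th variable alone, Tonelli's theorem gives $\frac{1}{\prod_{i}|B(x_i,r_i)|}\int_{\prod_i B(x_i,r_i)}f(y)\diff y=A_{r_1}^{(1)}A_{r_2}^{(2)}\cdots A_{r_k}^{(k)}f(x)$, a composition of positive operators acting on disjoint groups of variables.

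For the upper bound I would first prove the pointwise iterated estimate $\tilde{M}_{\boldsymbol{a}}^{\boldsymbol{b}}f\le M^{(1)}M^{(2)}\cdots M^{(k)}f$ by peeling off the suprema one block at a time. Fixing $r_1,\dots,r_{k-1}$, the bound $A_{r_k}^{(k)}f\le M^{(k)}f$ holds pointwise, and applying the positive, monotone operator $A_{r_1}^{(1)}\cdots A_{r_{k-1}}^{(k-1)}$ preserves it; taking the supremum over $r_k$ then eliminates that radius, and iterating downward in the index produces the claimed composition. Since each $M^{(i)}$ acts only on $x_i$, Tonelli's theorem together with the one-block inequality $\int_{\bR^{n_i}}M^{(i)}g\,\diff x_i\le\l(1+n_i\ln\frac{b_i}{a_i}\r)\int_{\bR^{n_i}}g\,\diff x_i$, applied to one variable at a time, yields $\l\|\tilde{M}_{\boldsymbol{a}}^{\boldsymbol{b}}f\r\|_{L^1(\bR^n)}\le\prod_{i=1}^k\l(1+n_i\ln\frac{b_i}{a_i}\r)\|f\|_{L^1(\bR^n)}$.

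For the matching lower bound I would test on tensor products $f(x)=\prod_{i=1}^k f_i(x_i)$ with $0\le f_i\in L^1(\bR^{n_i})$. The factorization of the averages gives $\frac{1}{\prod_i|B(x_i,r_i)|}\int_{\prod_i B(x_i,r_i)}f=\prod_{i=1}^k A_{r_i}^{(i)}f_i(x_i)$, and because each factor is nonnegative and involves a separate radius, the identity $\sup_{\boldsymbol{r}}\prod_i g_i(r_i)=\prod_i\sup_{r_i}g_i(r_i)$ upgrades this to the \emph{exact} pointwise identity $\tilde{M}_{\boldsymbol{a}}^{\boldsymbol{b}}f(x)=\prod_{i=1}^k M^{(i)}f_i(x_i)$. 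Integrating over $\bR^n$ and applying Tonelli once more gives $\l\|\tilde{M}_{\boldsymbol{a}}^{\boldsymbol{b}}f\r\|_{L^1}=\prod_{i=1}^k\l\|M^{(i)}f_i\r\|_{L^1(\bR^{n_i})}$ and $\|f\|_{L^1}=\prod_{i=1}^k\|f_i\|_{L^1(\bR^{n_i})}$, so the ratio $\l\|\tilde{M}_{\boldsymbol{a}}^{\boldsymbol{b}}f\r\|_{L^1}/\|f\|_{L^1}$ factorizes as $\prod_{i=1}^k\l\|M^{(i)}f_i\r\|_{L^1}/\|f_i\|_{L^1}$. Choosing for each $i$ a sequence of nonnegative near-extremizers $f_i$ whose ratio tends to $\l\|M^{(i)}\r\|_{L^1(\bR^{n_i})\to L^1(\bR^{n_i})}=1+n_i\ln\frac{b_i}{a_i}$ --- which exist by the very definition of the operator norm together with Theorem~\ref{main-2} --- drives the product up to $\prod_{i=1}^k\l(1+n_i\ln\frac{b_i}{a_i}\r)$. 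Together with the upper bound this proves \eqref{G-4-1**}.

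The main obstacle is the upper bound, and specifically the rigorous justification of the pointwise reduction $\tilde{M}_{\boldsymbol{a}}^{\boldsymbol{b}}f\le M^{(1)}\cdots M^{(k)}f$: one must check that interchanging the block suprema with the positive averaging operators is legitimate and that measurability is preserved at each stage, after which the Tonelli iteration is routine. The lower bound, by contrast, is clean --- for tensor products the supremum collapses to an exact product, so no extremizer need be displayed explicitly, the definition of the norm already furnishing near-extremizers on each factor.
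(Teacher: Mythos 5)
Your proof is correct, but it takes a genuinely different route from the paper. The paper argues by direct construction: it first reduces an arbitrary normalized $L^1$ function to a constant multiple of the characteristic function of a tiny cube (via Lemmas \ref{L2.2} and \ref{L2.3}), then dominates $\tilde M_{\boldsymbol{a}}^{\boldsymbol{b}}g$ pointwise by an explicit control function $u$ defined on a partition of $\bR^n$ into $2^k+1$ regions cut out by the two product sets $\prod_i B(0,a_i+\sqrt{n_i}s)$ and $\prod_i B(0,b_i+\sqrt{n_i}s)$, computes $\|u\|_1$ blockwise in polar coordinates (producing the correction terms $A_i(s)$), and lets $s\to 0$; the lower bound comes from the single family $f_m=\chi_{B(0,1/m)}/(V_nm^{-n})$ together with an explicit minorant $w$. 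You instead tensorize: the upper bound follows from the iterated pointwise domination $\tilde M_{\boldsymbol{a}}^{\boldsymbol{b}}f\le M^{(1)}\cdots M^{(k)}f$ plus Tonelli and Theorem \ref{main-2} applied slice-by-slice on each block, and the lower bound from tensor-product test functions, for which the supremum over $(r_1,\dots,r_k)$ factorizes exactly so that per-block near-extremizers multiply. Your approach is shorter, avoids all the region-by-region integrals and the $A_i(s)$, $B_i(m)$ error terms, and makes the product form of the constant transparent; its only cost is that it uses Theorem \ref{main-2} as a black box (harmless, since the paper proves that first) and requires the routine but necessary checks you already flag --- measurability of the iterated maximal functions, the interchange of block suprema with positive averages, and the equality of the suprema over the open and closed radius ranges (the paper's definition of $\tilde M_{\boldsymbol{a}}^{\boldsymbol{b}}$ uses $a_i\le r_i\le b_i$), all of which go through by continuity of the averages in the radii. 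I see no gap.
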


\begin{rem}
Theorem \ref{main-4} covers both Theorem \ref{main-2} and Theorem \ref{main-3}.
\end{rem}
\begin{rem}
It is well known that the classical Hardy-Littlewood maximal operator $M$ is bounded from $L^1(\bR^n)$ to $L^{1,\infty}(\bR^n)$, but not bounded on $L^1(\bR^n)$. However, Theorem \ref{main-2} implies that the truncated Hardy-Littlewood maximal operator $M^b_a$ is bounded on $L^1(\bR^n)$. This clearly indicates the difference between $M$ and $M^b_a$. Indeed, previous literature mainly shows the similarity between the classical case and the truncated case, and our result demonstrates the truncated maximal operator has its own characteristics.
\end{rem}
Based on these theorems, one can directly obtain the following corollary.	
\begin{coro}\label{coro001}
	Set $\theta:=\frac ba$. Given $0<a\leq b<\infty$, we can show that
	\begin{equation*}
		\lim_{\theta\to \infty} \|M_a^b\|_{L^1(\bR^n)\to L^1(\bR^n)} =\infty,
	\end{equation*}
	and 
	\begin{gather*}
		\lim_{\theta\to \infty} \frac {\|M_a^b\|_{L^1(\bR^n)\to L^1(\bR^n)}}{\ln\frac ba} =n.
	\end{gather*}
\end{coro}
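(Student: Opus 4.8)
The plan is to read off both limits directly from the exact formula supplied by Theorem \ref{main-2}, since the corollary is advertised as an immediate consequence. Recall that Theorem \ref{main-2} gives, for every $0<a\le b<\infty$,
\begin{equation*}
\l\|M_a^b\r\|_{L^1(\bR^n)\to L^1(\bR^n)}=1+n\ln\frac{b}{a}.
\end{equation*}
Setting $\theta:=\frac{b}{a}$, so that $\theta\in[1,\infty)$ and $\ln\frac{b}{a}=\ln\theta$, I would rewrite this identity as
\begin{equation*}
\l\|M_a^b\r\|_{L^1(\bR^n)\to L^1(\bR^n)}=1+n\ln\theta.
\end{equation*}
All subsequent steps are elementary asymptotics applied to this expression.

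For the first limit, I would observe that $\ln\theta\to\infty$ as $\theta\to\infty$, and since $n\ge 1$ is a fixed positive integer, the quantity $1+n\ln\theta$ diverges to $+\infty$. This is literally the statement
\begin{equation*}
\lim_{\theta\to\infty}\l\|M_a^b\r\|_{L^1(\bR^n)\to L^1(\bR^n)}=\lim_{\theta\to\infty}\bigl(1+n\ln\theta\bigr)=\infty,
\end{equation*}
which disposes of the first assertion.

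For the second limit, I would divide the exact formula by $\ln\frac{b}{a}=\ln\theta$ (which is positive and tends to infinity, so the quotient is well defined for all large $\theta$), obtaining
\begin{equation*}
\frac{\l\|M_a^b\r\|_{L^1(\bR^n)\to L^1(\bR^n)}}{\ln\frac{b}{a}}=\frac{1+n\ln\theta}{\ln\theta}=\frac{1}{\ln\theta}+n.
\end{equation*}
Since $\frac{1}{\ln\theta}\to 0$ as $\theta\to\infty$, the right-hand side converges to $n$, which is exactly the claimed identity. I would honestly flag that there is no genuine obstacle here: once Theorem \ref{main-2} is in hand, the corollary reduces to two one-line limit computations, and the only point worth stating explicitly is that $n$ enters the norm as the coefficient of the logarithmic term, which is precisely why it survives as the limiting ratio while the additive constant $1$ washes out.
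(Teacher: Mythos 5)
Your proposal is correct and matches the paper's intent exactly: the paper presents this corollary as a direct consequence of Theorem \ref{main-2} (``Based on these theorems, one can directly obtain the following corollary''), and your two one-line limit computations with $\theta=\frac{b}{a}$ are precisely that direct consequence. Nothing is missing, and no alternative route is taken.
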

\begin{rem}
Corollary \ref{coro001} may be viewed as an intutive explanation of the unboundedness of $M$ and $M_0^{\alpha}$ on $L^1(\bR^n)$,  and it also shows the increasing rate of $\|M_a^b\|_{L^1}$ with respect to $\frac ba$ is $\log$-type.
\end{rem}

The rest of the present paper will be organized as follows: In section \ref{sec2}, we will give some important lemmas.
The $L^1(\bR^n)$ norm of $M_a^b$ will be calculated in section \ref{sec3}. Section \ref{sec4} shows the proof of $L^1(\bR^n)$ norm of $\tilde M_{\boldsymbol{a}}^{\boldsymbol{b}}$ which also contains the calculation of the $L^1(\bR^n)$ norm of $ M_{\boldsymbol{a}}^{\boldsymbol{b}}$.
A tacit understanding in the present paper is that all rectangles in $\bR^n$ are rectangles with sides parallel to the coordinate axes.
We denote by $|E|$ the Lebesgue measure of the measurable set $E\subset \bR^n$.

\section{Some facts and lemmas}\label{sec2}
Before we proceed to prove our main results,  several basic lemmas are needed. For simplicity, we assume that the function $f$ is non-negative in the proof of the following lemmas. Now, let $M_a^b$, $M_{\boldsymbol{a}}^{\boldsymbol{b}}$ and  $\tilde M_{\boldsymbol{a}}^{\boldsymbol{b}}$ as defined in Section \ref{sec1}.
\begin{lem}\label{L2.1}
	For $0<a\le b<\infty$ and ${\bf 0}<\boldsymbol{a}\leq \boldsymbol{b}<\boldsymbol{\infty}$,  we have that
\begin{align*}
\l\|M_a^b\r\|_{L^1}&\le \left(\frac ba\right)^n,\\
\l\|M_{\boldsymbol{a}}^{\boldsymbol{b}}\r\|_{L^1}&\leq \prod\limits _{i=1}^n\frac {b_i}{a_i},
\end{align*}
and
\begin{gather*}
\l\|\tilde M_{\boldsymbol{a}}^{\boldsymbol{b}}\r\|_{L^1}\leq \prod\limits _{i=1}^k\left(\frac {b_i}{a_i}\right)^{n_i}.
\end{gather*}
\end{lem}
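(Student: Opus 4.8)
The plan is to establish all three inequalities by one and the same elementary mechanism: decouple the supremum defining the maximal operator into a \emph{worst-case denominator} and a \emph{largest averaging domain}, thereby dominating the maximal function pointwise by a single fixed average, and then integrate in $x$ and invoke Tonelli's theorem. Recall that by the standing convention $f\ge 0$, so $|f|=f$ throughout.

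First I would treat $M_a^b$. Fix $x$ and observe that for every admissible radius $a<r<b$ one has simultaneously $|B(x,r)|\ge |B(x,a)|$ (monotonicity of the measure) and $B(x,r)\subset B(x,b)$ (monotonicity of the domain). Since $f\ge 0$, these two facts give
\[
\frac{1}{|B(x,r)|}\int_{B(x,r)}|f(y)|\diff y\le \frac{1}{|B(x,a)|}\int_{B(x,b)}|f(y)|\diff y
\]
for every such $r$, and taking the supremum yields the pointwise bound $M_a^bf(x)\le |B(x,a)|^{-1}\int_{B(x,b)}|f|$. Writing $|B(x,a)|=v_n a^n$ with $v_n:=|B(0,1)|$, I then integrate in $x$ and apply Tonelli's theorem (legitimate because the integrand is non-negative) to interchange the order of integration. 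The resulting inner integral $\int_{\bR^n}\mathbf 1_{\{|x-y|<b\}}\diff x$ equals $|B(y,b)|=v_n b^n$, so after cancelling $v_n$ I obtain $\l\|M_a^bf\r\|_{L^1}\le (b/a)^n\|f\|_{L^1}$, which is the first bound.

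Next, for $M_{\boldsymbol a}^{\boldsymbol b}$ and $\tilde M_{\boldsymbol a}^{\boldsymbol b}$ I would run exactly the same argument, replacing the ball by an axis-parallel rectangle and by a product of balls, respectively. Any admissible rectangle $R\in\mathscr R(x)$ has half-side-lengths $s_i\in(a_i,b_i)$, hence $|R|\ge 2^n\prod_i a_i$ and $R\subset\prod_i(x_i-b_i,x_i+b_i)=:Q(x)$; likewise any admissible product set $\prod_iB(x_i,r_i)$ satisfies $\prod_i|B(x_i,r_i)|\ge\prod_i v_{n_i}a_i^{n_i}$ and is contained in $\prod_iB(x_i,b_i)=:P(x)$. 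The identical pointwise domination followed by Tonelli reduces both cases to computing $|Q(y)|=2^n\prod_i b_i$ and $|P(y)|=\prod_i v_{n_i}b_i^{n_i}$, and the volume factors cancel to leave precisely $\prod_i b_i/a_i$ and $\prod_i(b_i/a_i)^{n_i}$.

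I expect no serious obstacle here: the entire content is the two monotonicity replacements together with the Tonelli interchange, and these crude bounds are exactly what make the argument short. The only point demanding a little care is the bookkeeping of the normalizing constants $v_{n_i}$ (and the factors $2^n$ in the rectangular case): one must verify that each such constant appears identically in the lower bound for the measure of the averaging set and in the volume of the enveloping set, so that it cancels exactly and the clean ratios stated in the lemma survive. This is also the step where the geometry enters, since it is the scaling $|B(x,r)|=v_n r^n$ that lets the supremum be absorbed into a single average.
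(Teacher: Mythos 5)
Your proposal is correct and is essentially identical to the paper's own argument: the paper likewise dominates $M_a^bf(x)$ pointwise by $|B(x,a)|^{-1}\int_{B(x,b)}|f|$ and then interchanges the order of integration to produce the factor $(b/a)^n$, extending to the other two operators ``by the similar method.'' Your explicit attention to the cancellation of the normalizing constants $v_{n_i}$ and $2^n$ is a fair elaboration of what the paper leaves implicit, but it is not a different route.
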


\begin{proof}
		Let $f\in L^1(\bR^n)$. A direct computation implies that
 \begin{eqnarray*}
		\l\|M_a^bf\r\|_1&=&\int_{\bR^n}M_a^bf(x)\diff{x}\\
&=&\int_{\bR^n} \sup_{\atop{a<r<b}}\frac
		{1}{|B(x,r)|}\int_{B(x,r)}|f(y)|\diff{y}\diff{x}\\
		&\le& \int_{\bR^n} \frac {1}{|B(x,a)|}\int_{B(x,b)}|f(y)|\diff{y}\diff{x}\\
&=& \frac {1}{V_na^n}\int_{\bR^n} \int_{B(x,b)}|f(y)|\diff{y}\diff{x}\\
		&=& \frac {1}{V_na^n}\int_{\bR^n} \int_{\bR^n}\chi_{B(x,b)}|f(y)|\diff{y}\diff{x}\\
		&=&\frac {1}{V_na^n}\int_{\bR^n}|f(y)|\diff{y} \int_{\bR^n}\chi_{B(x,b)}\diff{x}\\
		&=&\l(\frac{b}{a}\r)^n\|f\|_{1},
	\end{eqnarray*}
	where $V_n$ is the volume of the unit ball in $\bR^n$.
	
By the definition of  norm of operator, we have
\begin{gather*}
\l\|M_a^b\r\|_{L^1}={\sup_{\|f\|_1=1}}\l\|M_a^bf\r\|_1.
\end{gather*}
It follows that
\begin{gather*}
\l\|M_a^b\r\|_{L^1}\le \l(\frac{b}{a}\r)^n.
\end{gather*}
Using the similar method, it can be deduced
 \begin{gather*}
 \l\|M_{\boldsymbol{a}}^{\boldsymbol{b}}\r\|_{L^1}\leq \prod_{i=1}^n\frac {b_i}{a_i},
 \end{gather*}
  and
 \begin{gather*}
\l \|\tilde M_{\boldsymbol{a}}^{\boldsymbol{b}}\r\|_{L^1}\leq \prod _{i=1}^k\l(\frac {b_i}{a_i}\r)^{n_i}.
 \end{gather*}
\end{proof}

Let $C_c(\bR^n)$ be the collection of all continuous functions with compact support on $\bR^n$. It is clear  that $C_c(\bR^n)$ is dense in $L^p(\bR^n)$. From this simple observation, we can reduce our discussion into $C_c(\bR^n)$.

\begin{lem}\label{L2.2}
 For  ${\bf 0}<\boldsymbol{a}\leq \boldsymbol{b}<\boldsymbol{\infty}$ and $1\leq p<\infty$, we have
\begin{gather*}
\l\|\tilde M_{\boldsymbol{a}}^{\boldsymbol{b}}\r\|_{L^p}={\sup_{\varphi\in C_c(\bR^n)\colon\|\varphi\|_{p}=1}}\l\|\tilde M_{\boldsymbol{a}}^{\boldsymbol{b}}\varphi\r\|_p.
\end{gather*}

\end{lem}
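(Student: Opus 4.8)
The plan is to prove the two inequalities separately. Since $C_c(\bR^n)\subseteq L^p(\bR^n)$, the supremum defining the right-hand side is taken over a subset of the unit sphere of $L^p(\bR^n)$, so the inequality $\sup_{\varphi\in C_c(\bR^n)\colon\|\varphi\|_p=1}\l\|\tilde M_{\boldsymbol{a}}^{\boldsymbol{b}}\varphi\r\|_p\le \l\|\tilde M_{\boldsymbol{a}}^{\boldsymbol{b}}\r\|_{L^p}$ is immediate from the definition of the operator norm. The content is therefore the reverse inequality, which I would obtain by a density-plus-continuity argument.

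The two ingredients I would isolate first are: (a) the finiteness of $\l\|\tilde M_{\boldsymbol{a}}^{\boldsymbol{b}}\r\|_{L^p}$ for every $1\le p<\infty$, and (b) the sublinearity of the operator. For (a), note that $\tilde M_{\boldsymbol{a}}^{\boldsymbol{b}}$ is trivially bounded on $L^\infty(\bR^n)$ with norm $\le 1$, since each average of $|f|$ is bounded by $\|f\|_\infty$, while Lemma \ref{L2.1} shows it is bounded on $L^1(\bR^n)$; the Marcinkiewicz interpolation theorem, which applies to sublinear operators, then yields boundedness on every intermediate $L^p(\bR^n)$. For (b), since the averages in the definition involve $|f|$ and $|f|\le|f-g|+|g|$ pointwise, one gets $\tilde M_{\boldsymbol{a}}^{\boldsymbol{b}} f\le \tilde M_{\boldsymbol{a}}^{\boldsymbol{b}}(f-g)+\tilde M_{\boldsymbol{a}}^{\boldsymbol{b}} g$ and, by symmetry, the reverse triangle inequality
\[
\bigl|\tilde M_{\boldsymbol{a}}^{\boldsymbol{b}} f(x)-\tilde M_{\boldsymbol{a}}^{\boldsymbol{b}} g(x)\bigr|\le \tilde M_{\boldsymbol{a}}^{\boldsymbol{b}}(f-g)(x)\quad\text{for every }x\in\bR^n.
\]

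With these in hand I would argue as follows. Fix $f\in L^p(\bR^n)$ with $\|f\|_p=1$ and $\varepsilon>0$, and use the density of $C_c(\bR^n)$ in $L^p(\bR^n)$ to pick $\varphi\in C_c(\bR^n)$ with $\|f-\varphi\|_p<\varepsilon$. Taking $L^p$ norms in the reverse triangle inequality and invoking (a) gives
\[
\bigl\|\tilde M_{\boldsymbol{a}}^{\boldsymbol{b}} f-\tilde M_{\boldsymbol{a}}^{\boldsymbol{b}}\varphi\bigr\|_p\le \bigl\|\tilde M_{\boldsymbol{a}}^{\boldsymbol{b}}(f-\varphi)\bigr\|_p\le \l\|\tilde M_{\boldsymbol{a}}^{\boldsymbol{b}}\r\|_{L^p}\,\|f-\varphi\|_p<\l\|\tilde M_{\boldsymbol{a}}^{\boldsymbol{b}}\r\|_{L^p}\,\varepsilon,
\]
so $\l\|\tilde M_{\boldsymbol{a}}^{\boldsymbol{b}}\varphi\r\|_p\ge \l\|\tilde M_{\boldsymbol{a}}^{\boldsymbol{b}} f\r\|_p-\l\|\tilde M_{\boldsymbol{a}}^{\boldsymbol{b}}\r\|_{L^p}\varepsilon$. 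Since $\|\varphi\|_p$ need not equal $1$, I would normalize by setting $\psi=\varphi/\|\varphi\|_p\in C_c(\bR^n)$; the positive homogeneity $\tilde M_{\boldsymbol{a}}^{\boldsymbol{b}}(\lambda\varphi)=|\lambda|\,\tilde M_{\boldsymbol{a}}^{\boldsymbol{b}}\varphi$ gives $\l\|\tilde M_{\boldsymbol{a}}^{\boldsymbol{b}}\psi\r\|_p=\l\|\tilde M_{\boldsymbol{a}}^{\boldsymbol{b}}\varphi\r\|_p/\|\varphi\|_p$, while the triangle inequality forces $1-\varepsilon\le\|\varphi\|_p\le 1+\varepsilon$. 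Combining these estimates shows $\l\|\tilde M_{\boldsymbol{a}}^{\boldsymbol{b}}\psi\r\|_p\ge \l\|\tilde M_{\boldsymbol{a}}^{\boldsymbol{b}} f\r\|_p-C\varepsilon$ for a constant $C$ independent of $\varepsilon$, whence the right-hand supremum is at least $\l\|\tilde M_{\boldsymbol{a}}^{\boldsymbol{b}} f\r\|_p-C\varepsilon$. Letting $\varepsilon\to 0$ and then taking the supremum over all such $f$ yields the reverse inequality $\l\|\tilde M_{\boldsymbol{a}}^{\boldsymbol{b}}\r\|_{L^p}\le\sup_{\varphi\in C_c(\bR^n)\colon\|\varphi\|_p=1}\l\|\tilde M_{\boldsymbol{a}}^{\boldsymbol{b}}\varphi\r\|_p$.

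The routine direction is trivial, so the only real obstacle is making the continuity step rigorous, and its crux is precisely the finiteness of $\l\|\tilde M_{\boldsymbol{a}}^{\boldsymbol{b}}\r\|_{L^p}$: the reverse triangle inequality is useful only once one knows the operator is $L^p$-bounded, which is why I would dispatch (a) at the very start. The normalization of $\varphi$ is merely a cosmetic nuisance that the homogeneity of the maximal operator handles cleanly.
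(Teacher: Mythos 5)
Your argument is correct, and it is the standard density-plus-continuity proof that the paper itself does not spell out (it defers to Lemma 2.5 of \cite{wei2016note}, whose proof is of exactly this type): the pointwise reverse triangle inequality for the sublinear operator, combined with an a priori $L^p$-bound obtained from Lemma \ref{L2.1} and the trivial $L^\infty$-bound via interpolation, and a harmless normalization of the approximant. No gaps; the one point worth stating explicitly is that the pointwise inequality $\bigl|\tilde M_{\boldsymbol{a}}^{\boldsymbol{b}} f(x)-\tilde M_{\boldsymbol{a}}^{\boldsymbol{b}} g(x)\bigr|\le \tilde M_{\boldsymbol{a}}^{\boldsymbol{b}}(f-g)(x)$ is used only where both terms are finite, which holds almost everywhere once the $L^p$-boundedness is in hand.
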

The proof is just a repetition of the proof of Lemma 2.5 in \cite{wei2016note}, so we omit it.

The following lemma will be used to give the upper bound of $\|M_a^b\|_{L^1}$.
\begin{lem}\label{L2.3}
Suppose that $\{O_j\}_{j=1}^{N}$ is a sequence of $N$ disjoint rectangles in $\bR^n$ such that each $O_j$ is a translation of $O_1$. For $\alpha_j\ge0$ with $j=1,2,\dots,N$, set
\begin{gather*}
f=\sum \limits_{j=1}^N\alpha_j\chi_{O_j},
\end{gather*}
and
\begin{gather*}
g=\left(\sum_{j=1}^N\alpha_i\right)\chi_{O_1}.
\end{gather*}
Then for $0<a\leq b<\infty$, the following statements hold.
\begin{enumerate}
\item[\rm{(\Rmnum{1})}] $\|f\|_{1}=\|g\|_{1}$,
\item[\rm{(\Rmnum{2})}] $\|M_a^bf\|_{1}\le\|M_a^bg\|_{1}$.
\end{enumerate}
\end{lem}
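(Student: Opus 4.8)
The plan is to treat the two claims separately, since (\Rmnum{1}) is purely measure-theoretic bookkeeping while (\Rmnum{2}) rests on the sublinearity, homogeneity and translation invariance of $M_a^b$. Throughout I write $A=\sum_{j=1}^N\alpha_j$, so that $g=A\chi_{O_1}$.

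For (\Rmnum{1}): because each $O_j$ is a translation of $O_1$, all the rectangles share the common measure $|O_j|=|O_1|$, and because they are pairwise disjoint the integral of $f$ splits as a sum over the $O_j$ with no overlap corrections. Hence I would simply compute
\[
\|f\|_1=\sum_{j=1}^N\alpha_j|O_j|=A\,|O_1|=\|g\|_1 .
\]
Disjointness and the equal-measure consequence of the translation hypothesis are exactly what this step needs.

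For (\Rmnum{2}) the three ingredients are: (a) $M_a^b$ is sublinear, i.e. $M_a^b(f_1+f_2)\le M_a^bf_1+M_a^bf_2$ pointwise, which follows from $|f_1+f_2|\le|f_1|+|f_2|$ together with the fact that the supremum of a sum is at most the sum of the suprema (the restriction $a<r<b$ causes no difficulty, since the same radius may be used for both terms on the right); (b) $M_a^b$ is positively homogeneous, $M_a^b(\alpha\varphi)=\alpha M_a^b\varphi$ for $\alpha\ge0$; and (c) $M_a^b$ commutes with translations, so that $\|M_a^b\chi_{O_j}\|_1=\|M_a^b\chi_{O_1}\|_1$ for every $j$, because $O_j=O_1+h_j$ forces $M_a^b\chi_{O_j}(x)=M_a^b\chi_{O_1}(x-h_j)$ and the $L^1$ norm is translation invariant. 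Applying (a) inductively to $f=\sum_j\alpha_j\chi_{O_j}$ and integrating gives $\|M_a^bf\|_1\le\sum_j\|M_a^b(\alpha_j\chi_{O_j})\|_1$; then (b) and (c) turn each summand into $\alpha_j\|M_a^b\chi_{O_1}\|_1$, and summing yields
\[
\|M_a^bf\|_1\le A\,\|M_a^b\chi_{O_1}\|_1=\|M_a^b(A\chi_{O_1})\|_1=\|M_a^bg\|_1 ,
\]
which is precisely (\Rmnum{2}).

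The main point to be careful about is that (\Rmnum{2}) uses the translation hypothesis crucially, so that all $N$ single-bump maximal functions carry the same $L^1$ mass, whereas the disjointness hypothesis is really only needed in (\Rmnum{1}); indeed the sublinearity bound above holds whether or not the $O_j$ overlap. I would also verify that the quantities involved are genuinely finite, so that the final identity $\|M_a^b(A\chi_{O_1})\|_1=A\|M_a^b\chi_{O_1}\|_1$ is meaningful, which is guaranteed by Lemma \ref{L2.1}. There is no serious obstacle here: the lemma is a reduction step whose purpose is to replace a general nonnegative step function supported on translated rectangles by a single concentrated bump, thereby reducing the computation of $\|M_a^b\|_{L^1}$ to the evaluation of the ratio $\|M_a^b\chi_{O_1}\|_1/|O_1|$.
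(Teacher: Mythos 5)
Your proposal is correct and follows essentially the same route as the paper: part (\Rmnum{1}) by disjointness plus the equal-measure consequence of translation, and part (\Rmnum{2}) by sublinearity (the paper phrases the integration step as Minkowski's inequality), positive homogeneity, and the translation invariance that gives $\l\|M_a^b\chi_{O_j}\r\|_1=\l\|M_a^b\chi_{O_1}\r\|_1$. Your added remarks --- that disjointness is only needed in (\Rmnum{1}) and that Lemma \ref{L2.1} guarantees finiteness --- are accurate refinements the paper leaves implicit.
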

\begin{proof}
We first give the proof of statement (\Rmnum{1}). A direct computation implies that
\begin{equation}\label{G-f}
	\|f\|_{1}=\int_{\bR^n}|f(x)|\diff{x}=\int_{\bR^n}\sum_{j=1}^N\alpha_j\chi_{O_j}(x)\diff{x}
	=\sum_{j=1}^N\int_{O_j}\alpha_j\diff{x}=\sum_{j=1}^N\alpha_j |O_j|.
\end{equation}
  Since each $O_j$ is a translation of $O_1$ and translation has the measure-invariance, we have $|O_j|=|O_1|$ for all $j=1,2,\dots,N$.
 Consequently, we obtain that
   \begin{equation}\label{G-g}
     \sum_{j=1}^N\alpha_j|O_j|=\sum_{j=1}^N\alpha_j|O_1|
     =\sum_{j=1}^N\int_{O_1}\alpha_i\diff{y}
     =\int_{\bR^n}\left(\sum_{j=1}^N\alpha_j\right)\chi_{O_1}\diff{y}
     =\|g\|_{1}.
  \end{equation}
Combining (\ref{G-f}) with (\ref{G-g}) yields $\|f\|_{1}=\|g\|_{1}$.

Next we prove statement (\Rmnum{2}). Using the sublinearity of $M_a^b$ and the Minkowski's inequality, we can obtain that
	 \begin{eqnarray*}
		\l\|M_a^bf\r\|_{1}&=&\l\|M_a^b\l(\sum_{j=1}^N\alpha_j\chi_{O_j}\r)\r\|_{1}\\
		&\le&\l\|\sum_{j=1}^N\alpha_j M_a^b(\chi_{O_j})\r\|_{1}\\
		&\le&\sum_{j=1}^N\alpha_j\l\| M_a^b(\chi_{O_j})\r\|_{1}\\
		&=&\left(\sum_{j=1}^N\alpha_j\right)\l\| M_a^b(\chi_{O_1})\r\|_{1}\\
		&=&\l\|\left(\sum_{j=1}^N\alpha_j\right) M_a^b(\chi_{O_1})\r\|_{1}\\
		&=&\l\| M_a^b\l(\left(\sum_{j=1}^N\alpha_j\right)\chi_{O_1}\r)\r\|_{1}\\
		&=&\l\|M_a^bg\r\|_{1},
	\end{eqnarray*}
which is exactly what we want to prove.
\end{proof}

For $ M_{\boldsymbol{a}}^{\boldsymbol{b}}$ and $\tilde M_{\boldsymbol{a}}^{\boldsymbol{b}}$, we have the same conclusion, and one can substitute $M_a^b$ in Lemma \ref{L2.3} by $ M_{\boldsymbol{a}}^{\boldsymbol{b}}$ or $\tilde  M_{\boldsymbol{a}}^{\boldsymbol{b}}$ to formulate the corresponding result.  Here we just omit their statements and proofs.
\section{The $L^1$-norm of the truncated maximal operator $M^b_a$}\label{sec3}
From Lemma \ref{L2.1}, we have  known that $M^b_a$ is bounded on $L^1(\bR^n)$. In this section, we will precisely calculate the $L^1$-norm of $M^b_a$.

\begin{proof}[Proof of Theorem \ref{main-2}]
	We first prove $\|M_a^bf\|_{1}$ is less than the right hand side of  (\ref{G-4-1}) for all $f\in L^1(\bR^n)$ satisfying $\|f\|_{1}=1$. Applying Lemma \ref{L2.2}, we can restrict our discussion on $C_c(\bR^n)$.
	
	 Let $\varphi\in C_c(\bR^n)$ satisfy $\|\varphi\|_{1}=1$. For all $\varepsilon>0$, there exists a simple function
	 \begin{gather*}
	 h'=\sum \limits_{i=1}^N\alpha_i\chi_{O_i}
	 \end{gather*}
	 such that $|\varphi|\leq h'$ and $\|h'\|_{1}\leq 1+\varepsilon$, where $\{\alpha_i\}_{i=1}^{N}$ and $\{O_i\}_{i=1}^{N}$ satisfy the assumptions in Lemma \ref{L2.3}. For the clarity of proof, we may further assume each $O_i$ is a cube.

  Obviously, there holds $\|M_a^b\varphi\|_{1}\le \|M_a^bh'\|_{1}$. For the function $h'$, using Lemma \ref{L2.3}, we may consider the function
  \begin{gather*}
  h=\left(\sum \limits_{i=1}^N\alpha_i\right)\chi_{O_1}
  \end{gather*}
   instead since $\|h\|_{1}=\|h'\|_{1}$ and $\|M_a^bh'\|_{1}\leq\|M_a^bh\|_{1}$.

  Now we further divide $O_1$ into $2^{nm}$ sub-cubes $O_{1l},~l=1,2,\dots,2^{nm}$, with the same side length. By choosing $m$ sufficiently large,
 the side length of each sub-cube $O_{1l}$ can be less than $\frac {a}{2\sqrt{n}}$. Take
 \begin{gather*}
 g=\left(2^{nm}\sum \limits_{i=1}^N\alpha_i\right)\chi_{O_{11}},
 \end{gather*}
using Lemma \ref{L2.3} again, we have $\|g\|_{1}=\|h\|_{1}$ and $\|M_a^bh\|_{1}\leq\|M_a^bg\|_{1}$.

 Without loss of generality, we assume that the cube $O_{11}$ is centered at the origin, since the $L^1$-norms of $g$ and $M_a^bg$ are invariant under the translation.
  Let the side length of $O_{11}$ be $2s$ and $s$ can be chosed sufficiently small.

  For the function $g$, we  can estimate $M_a^bg(x)$ via the scales of $x$. More specifically,  if $x\in B(0,a+\sqrt{n}s)$, then a simple computation implies that
 \begin{gather}
 \label{Ge-new-1}
 M_a^bg(x)\leq\frac{\|h\|_{1}}{{V_na^n}};
 \end{gather}
if  $x\in B(0,b+\sqrt{n}s)\backslash B(0,a+\sqrt{n}s)$, then it follows from a subtle calculation that
\begin{gather}
\label{Ge-new-2}
M_a^bg(x)\le \frac {\|h\|_{1}}{V_n({|x|-\sqrt{n}s)}^n};
\end{gather}
when $x\in B^c(0,b+\sqrt{n}s)$, we obviously have 
\begin{gather}
\label{Ge-new-3}
M_a^bg(x)=0.
\end{gather}
Based on estimates \eqref{Ge-new-1}, \eqref{Ge-new-2} and \eqref{Ge-new-3}, we choose the control function
  \begin{equation}
    u(x)=\left\{\begin{aligned}
  	 &\frac {\|h\|_{1}}{V_na^n}\quad & &\textrm{if }x\in B(0,a+\sqrt{n}s),\\
   	 &\frac {\|h\|_{1}}{V_n({|x|-\sqrt{n}s)}^n}\quad & &\textrm{if }x\in B(0,b+\sqrt{n}s)\backslash B(0,a+\sqrt{n}s),\\
  	 &0 \quad& &\textrm{if }x\in B^c(0,b+\sqrt{n}s).
   \end{aligned}
   \right.
  \end{equation}
Consequently, it immediately follows  that 
 \begin{gather*}
 \l\|M_a^bg\r\|_{1}\leq \|u\|_{1}.
 \end{gather*} 
Observe that the $L^1$-norm of $u$ can be estimated as follows:
\begin{equation}\label{G-4-fu}
	\|u\|_{1}=\left\{\begin{aligned}
		&\|h\|_{1}\left(\frac {(a+\sqrt{n}s)^n}{a^n}+n\left(\ln\frac {b}{a}+A(s)\right)\right) \quad &\textrm{if }n\geq 2,\\
		&\|h\|_{1}\l(\frac {a+s}{a}+\ln{\frac {b}{a}}\r) \quad &\textrm{if }n=1,
	\end{aligned}
	\right.
\end{equation}
where 
\begin{gather*}
A(s)=\sum \limits_{i=0}^{n-2}\left(C_{n-1}^{i} \frac {(\sqrt{n}s)^{n-i-1}}{(i+1-n)b^{n-i-1}}-C_{n-1}^{i} \frac {(\sqrt{n}s)^{n-i-1}}{(i+1-n)a^{n-i-1}}\right).
\end{gather*}
In fact, to obtain the identity \eqref{G-4-fu}, we conclude  that 
\begin{eqnarray*}
\int_{B(0, b+\sqrt{n}s)\backslash B(0,a+\sqrt{n}s) } \frac {1}{V_n({|x|-\sqrt{n}s)}^n}\diff{x}
&=&\frac {\omega_{n-1}}{V_n}\int_{a+\sqrt{n}s}^{b+\sqrt{n}s}\frac {r^{n-1}}{(r-\sqrt{n}s)^n}\diff{r}\\
&=&n\int_{a}^{b}\frac {(r+\sqrt{n}s)^{n-1}}{r^n}\diff{r}\\
&=&n\int_{a}^{b} \sum_{i=0}^{n-1}r^{i-n}(\sqrt{n}s)^{n-1-i}\diff{r}\\
&=&n\left(\ln\frac {b}{a}+A(s)\right),
\end{eqnarray*}
where $\omega_{n-1}$ is the surface area of the unit ball in $\bR^n$. One can also use sphere coordinates and Gamma function to deduce the same result.

   As a result, for each $\varphi\in C_c(\bR^n)$ with $\|\varphi\|_{1}=1$, we have   from the choice of $h$, $g$ and $u$ that
   \begin{equation}\label{G-4-les}
   \l\|M_a^b\varphi\r\|_{1}\le \l\|M_a^bh\r\|_{1}\le \l\|M_a^bg\r\|_{1}\le \|u\|_{1}.
   \end{equation}
  Thus, we can show from \eqref{G-4-fu}  and \eqref{G-4-les} that
   \begin{gather}
   \label{newaddf1}
   \l\|M_a^b\varphi\r\|_{1}\le \left\{\begin{aligned}
		&(1+\varepsilon)\left(\frac {(a+\sqrt{n}s)^n}{a^n}+n\left(\ln\frac {b}{a}+A(s)\right)\right) \quad &\textrm{if }n\geq 2,\\
		&(1+\varepsilon)\l(\frac {a+s}{a}+\ln{\frac {b}{a}}\r) \quad &\textrm{if }n=1.
	\end{aligned}
	\right.
   \end{gather}

   Let $s\rightarrow 0$ in (\ref{newaddf1}). We can deduce that
   	\begin{equation*}
   		\l\|M_a^b\varphi\r\|_{1}\le (1+\varepsilon)\l(1+n\ln\frac{b}{a}\r).
   \end{equation*}
   Since $\varepsilon>0$ is arbitrary, we immediately have that
   \begin{equation}\label{G-dan}
   	\l\|M_a^b\varphi\r\|_{1}\le\l(1+n\ln\frac{b}{a}\r).
   \end{equation}
Thus it naturally implies from (\ref{G-dan}) that
\begin{equation}\label{G-dan-1}
   	\l\|M_a^b\r\|_{L^1}\le \l(1+n\ln\frac{b}{a}\r).
   	\end{equation}

   Conversely, inspired by the shape and properties of function $u$, we construct a sequence of functions to deduce the inverse inequality. Let
   \begin{gather*}
   f_m=\frac {\chi_{B(0,1/m)}}{V_nm^{-n}},
   \end{gather*}
   for  $m=1,2,3,\dots$. Obviously,  $\|f_m\|_{1}=1$. For a sufficient large $m$, define
   \begin{equation*}
   	w(x)=\left\{\begin{aligned}
   		&\frac {1}{V_na^n} \quad &&\textrm{if }x\in B(0,a-1/m),\\
   		&\frac {1}{V_n({|x|+1/m})^n} \quad&&\textrm{if }x\in B(0,b-1/m)\backslash B(0,a-1/m),\\
   		&0 \quad&&\textrm{if }x\in B^c(0,b-1/m).
   	\end{aligned}
   	\right.
   \end{equation*}
  A direct computation yields that
 \begin{equation}\label{G-4-Mf}
  \l\|M_a^b\r\|_{L^1}\geq\l\|M_a^bf_m\r\|_{1}\ge \|w\|_{1}.
 \end{equation}
Moreover,  via the sphere coordinates, one can obtain $\|w\|_{1}$ as follows:
  \begin{equation}\label{G-fu}
  	\|w\|_{1}=\left\{\begin{aligned}
  		&\frac {(a-1/m)^n}{a^n}+n\left(\ln\frac {b}{a}+B(m)\right) \quad &\textrm{if }n\ge 2,\\
  		&\frac {a-1/m}{a}+\ln{\frac {b}{a}} \quad &\textrm{if }n=1,
  	\end{aligned}
  	\right.
  \end{equation}
  where
  \begin{gather*}
  B(m)=\sum \limits_{i=0}^{n-2}\left(C_{n-1}^{i} \frac {(-1/m)^{n-i-1}}{(i+1-n)b^{n-i-1}}-C_{n-1}^{i} \frac {(-1/m)^{n-i-1}}{(i+1-n)a^{n-i-1}}\right).
  \end{gather*}
 Let $m\rightarrow \infty$ for (\ref{G-fu}). We can  deduce from  (\ref{G-4-Mf}) that
     \begin{equation}\label{G-fu'}
   	\l\|M_a^b\r\|_{L^1}\ge
   		1+n\ln\frac{b}{a}.
   \end{equation}
Combining (\ref{G-dan-1}) with (\ref{G-fu'})  yields that
  \begin{equation*}
  \l\|M_a^b\r\|_{L^1}=1+n\ln\frac{b}{a}.
  \end{equation*}
  This  finishes the proof  Theorem \ref{main-2}.
\end{proof}

\section{The $L^1$-norm of the general strong truncated maximal operator $\tilde M_{\boldsymbol{a}}^{\boldsymbol{b}}$}\label{sec4}
As mentioned in Section \ref{sec1}, the proof of Theorem \ref{main-4} contains the proof of Theorem \ref{main-3}. In addition, from Lemma \ref{L2.1}, we have already known that the general strong truncated Hardy-Littlewood maximal operator $\tilde M_{\boldsymbol{a}}^{\boldsymbol{b}}$ is bounded on $L^1(\bR^n)$.  Therefore, in this section, we will focus on calculating the $L^1$-norm of  $\tilde M_{\boldsymbol{a}}^{\boldsymbol{b}}$, and readers can find that the first part of our proof is exactly the proof of Theorem \ref{main-3}.
\begin{proof}[Proof of Theorem \ref{main-4}]
	Similarly, we can restrict our discussion to $C_c(\bR^n)$. Now, let $\varphi\in C_c(\bR^n)$ satisfy $\|\varphi\|_{1}=1$. For any $\varepsilon>0$, following the same approach as in the proof of Theorem \ref{main-2}, there exists a simple function
	\begin{gather*}
	g=\left(2^{nm}\sum \limits_{i=1}^N\alpha_i\right)\chi_{O_{11}},
	\end{gather*}
	such that $\|g\|_1<1+\varepsilon$ and $\|\tilde M_{\boldsymbol{a}}^{\boldsymbol{b}}\varphi\|_{1}\le \|\tilde M_{\boldsymbol{a}}^{\boldsymbol{b}}g\|_{1}$, where $m,\, N \in \bN$, $\alpha_i>0$ with $i=1,2,\dots,N$, and $O_{11}$ is a cube in $\bR^n$ satisfying the following properties.
	\vspace{-0.3em}
\begin{enumerate}
\setlength{\itemsep}{1.5pt}
\setlength{\parsep}{0pt}
\setlength{\parskip}{0pt}
\setlength{\topsep}{1ex}
\item [(\it{a}\rm{)}] $O_{11}$ is centered at the origin of $\mathbb{R}^n$;
\item [(\it{b}\rm{)}] Via choosing $m$ sufficiently large, the side length of $Q_{11}$ can be less than $\min_{i}(\frac {a_i}{2\sqrt{n_i}})$.
\end{enumerate}

\vspace{-0.3em}
	If we denote by $2s$ the side length of $Q_{11}$, it follows that $s$ can be small enough with the assumption that $m$ is sufficiently large.
	
	\vspace{0.5em}
	{\it Part 1.} Let us first tackle the case $k=n$, which coincides with Theorem \ref{main-3}. For the sake of clarity in writing, we merely give the proof with the case $n = 2$, and the same is true for $n \neq 2$.
	
	For such a function $g$, we  can estimate $\tilde M_{\boldsymbol{a}}^{\boldsymbol{b}}g(x)$ via the scales of $x$, and the basic idea is dividing the whole space $\bR^2$ into 5 parts by two rectangles, that is,
	\begin{gather*}
	\prod_{i=1}^2[-a_i-s,a_i+s]
	\end{gather*}
	and
	\begin{gather*}
	\prod_{i=1}^2[-b_i-s,b_i+s].
	\end{gather*}
	Using the definition of  $\tilde M_{\boldsymbol{a}}^{\boldsymbol{b}}g(x)$, one can directly compute the following results.

	For $x\in \prod_{i=1}^2[-a_i-s,a_i+s]$, there simply holds
	\begin{gather*}
	\tilde M_{\boldsymbol{a}}^{\boldsymbol{b}}g(x)\le\frac{\|g\|_{1}}{{2^2a_1a_2}}.
	\end{gather*}
	For $x\in ([-b_1-s,b_1+s]\backslash [-a_1-s,a_1+s])\times[-a_2-s,a_2+s]$, we can conclude by a crucial calculation that 
	\begin{gather*}
	\tilde M_{\boldsymbol{a}}^{\boldsymbol{b}}g(x)\le \frac {\|g\|_{1}}{2^2a_2(|x_1|-s)}.
	\end{gather*}
	For $x\in [-a_1-s,a_1+s]\times ([-b_2-s,b_2+s]\backslash [-a_2-s,a_2+s])$, there similarly holds
	\begin{gather*}
	\tilde M_{\boldsymbol{a}}^{\boldsymbol{b}}g(x)\le \frac {\|g\|_{1}}{2^2a_1(|x_2|-s)}.
	\end{gather*}
	For $x\in \prod_{i=1}^2([-b_i-s,b_i+s]\backslash [-a_i-s,a_i+s])$, it can be analogically deduced that 
	\begin{gather*}
	\tilde M_{\boldsymbol{a}}^{\boldsymbol{b}}g(x)\le \frac {\|g\|_{1}}{2^2(|x_1|-s)(|x_2|-s)}.
	\end{gather*}
	At last, for $x\in (\prod_{i=1}^2[-b_i-s,b_i+s])^{c}$, we immediately have
	\begin{gather*}
	\tilde M_{\boldsymbol{a}}^{\boldsymbol{b}}g(x)=0.
	\end{gather*}
	
	Similarly, for the $n$ dimensional case, we can also split the whole space $\bR^n$ into $2^n+1$ pieces by two rectangles, namely,
	\begin{gather*}
	\prod_{i=1}^n[-a_i-s,a_i+s]
	\end{gather*}
	 and
	 \begin{gather*}
	 \prod_{i=1}^n[-b_i-s,b_i+s].
	 \end{gather*}
	
	Following these estimates, we take
	\begin{equation*}
		u(x)=\left\{\begin{aligned}
			&\frac{\|g\|_{1}}{{2^2a_1a_2}}\, & &\textrm{if }x\in \prod\limits _{i=1}^2[-a_i-s,a_i+s],\\
			&\frac {\|g\|_{1}}{2^2a_2(|x_1|-s)}\,&&\textrm{if }x\in \left([-b_1-s,b_1+s]\backslash [-a_1-s,a_1+s]\right)\times [-a_2-s,a_2+s],\\
			&\frac {\|g\|_{1}}{2^2a_1(|x_2|-s)}\,&&\textrm{if }x\in[-a_1-s,a_1+s]\times ([-b_2-s,b_2+s]\backslash [-a_2-s,a_2+s]),\\
			&\frac {\|g\|_{1}}{2^2(|x_1|-s)(|x_2|-s)}\,&&\textrm{if }x\in\prod\limits _{i=1}^2([-b_i-s,b_i+s]\backslash [-a_i-s,a_i+s]),\\
			&0\, & &\textrm{if }x\in \left(\prod_{i=1}^2[-b_i-s,b_i+s]\right)^c.
		\end{aligned}
		\right.
	\end{equation*}
	The $L^1$-norm of $u$ can be estimated as follows:
	\begin{equation}
	\label{new1}
	\begin{split}
			&\|u\|_{1}=\|g\|_{1}\left(\prod\limits _{i=1}^2\frac {a_i+s}{a_i}+\frac {a_2+s}{a_2}\ln\frac {b_1}{a_1}+\frac {a_1+s}{a_1}\ln\frac {b_2}{a_2}+\prod\limits _{i=1}^2\ln\frac{b_i}{a_i}\right).
		\end{split}
	\end{equation}

	For each $\varphi\in C_c(\bR^2)$ with $\|\varphi\|_{1}=1$, we have
	\begin{equation}\label{G-4-les2}
		\l\|\tilde M_{\boldsymbol{a}}^{\boldsymbol{b}}\varphi\r\|_{1}\le \l\|\tilde M_{\boldsymbol{a}}^{\boldsymbol{b}}g\r\|_{1}\le \|u\|_{1}
	\end{equation}
	from the choice of $g$ and $u$.
	Let $s\rightarrow 0$ in \eqref{new1} and take (\ref{G-4-les2}) into consideration, we get
	\begin{align*}
		\l\|\tilde M_{\boldsymbol{a}}^{\boldsymbol{b}}\varphi\r\|_{1}&\le (1+\varepsilon)\left(1+\sum_{i=1}^{2} \ln\frac {b_i}{a_i}+\prod\limits _{i=1}^2\ln\frac{b_i}{a_i}\right)=(1+\varepsilon)\prod\limits_{i=1}^{2}\left(1+\ln\frac {b_i}{a_i}\right).
	\end{align*}
	Since $\varepsilon>0$ is arbitrary, we have
	\begin{equation*}
		\l\|\tilde M_{\boldsymbol{a}}^{\boldsymbol{b}}\varphi\r\|_{1}\le\prod\limits_{i=1}^{2}\left(1+\ln\frac {b_i}{a_i}\right).
	\end{equation*}
	So, it naturally follows that
	\begin{equation}
	\label{new4}
		\l\|\tilde M_{\boldsymbol{a}}^{\boldsymbol{b}}\r\|_{L^1}\le \prod\limits_{i=1}^{2}\left(1+\ln\frac {b_i}{a_i}\right).
	\end{equation}

	Conversely, let
	\begin{gather*}
	f_m=\frac {\chi_{B(0,1/m)}}{V_2m^{-2}},
	\end{gather*}
	for $m=1,2,3,\dots$. Obviously,  $\|f_{m}\|_{1}=1$.
	
	For a sufficiently large $m$, take
	\begin{gather*}
	\begin{footnotesize}
	w(x)=\left\{\begin{aligned}
			&\frac{1}{{2^2a_1a_2}}&&\prod\limits _{i=1}^2[-a_i+1/m,a_i-1/m],\\
			&\frac {1}{2^2a_2(|x_1|+1/m)}&&([-b_1+1/m,b_1-1/m]\backslash [-a_1+1/m,a_1-1/m])\times [-a_2+1/m,a_2-1/m],\\
			&\frac {1}{2^2a_1(|x_2|+1/m)}&& [-a_1+1/m,a_1-1/m]\times ([-b_2+1/m,b_2-1/m]\backslash [-a_2+1/m,a_2-1/m]),\\
			&\frac {1}{2^2(|x_1|+1/m)(|x_2|+1/m)}&&\prod\limits _{i=1}^2([-b_i+1/m,b_i-1/m]\backslash [-a_i+1/m,a_i-1/m]),\\
			&0& &\left(\prod\limits _{i=1}^2[-b_i+1/m,b_i-1/m]\right)^c.
		\end{aligned}
		\right.
		\end{footnotesize}
	\end{gather*}
	
A direct computation yields that
	\begin{equation}\label{G-4-Mf2}
		\l\|\tilde M_{\boldsymbol{a}}^{\boldsymbol{b}}\r\|_{L^1}\geq\l\|\tilde M_{\boldsymbol{a}}^{\boldsymbol{b}}f_m\r\|_{1}\ge \|w\|_{1}.
	\end{equation}
The $L^1(\bR^2)$ norm of $w$ can be estimated as follows:
	\begin{equation}
	\label{new3}
	\begin{split}
			&||w||_{1}=\prod\limits _{i=1}^2\frac {a_i-1/m}{a_i}+\frac {a_2-1/m}{a_2}\ln\frac {b_1}{a_1}+\frac {a_1-1/m}{a_1}\ln\frac {b_2}{a_2}+\prod\limits _{i=1}^2\ln\frac{b_i}{a_i}.
		\end{split}
	\end{equation}
	Let $m\rightarrow \infty$ in \eqref{new3}
	and take account of (\ref{G-4-Mf2}), we have
	\begin{equation}\label{G-fu2'}
		\l\|\tilde M_{\boldsymbol{a}}^{\boldsymbol{b}}\r\|_{L^1}\ge\prod\limits_{i=1}^{2}\left(1+\ln\frac {b_i}{a_i}\right).
	\end{equation}
	Combining \eqref{new4} with (\ref{G-fu2'}), it yields
	\begin{equation*}
		\l\|\tilde M_{\boldsymbol{a}}^{\boldsymbol{b}}\r\|_{L^1}= \prod\limits_{i=1}^{2}\left(1+\ln\frac {b_i}{a_i}\right),
	\end{equation*}
which finishes the proof of part 1.

\vspace{0.5em}
{\it Part 2.}  Now we divert our attention to the case $k< n$. Without loss of generality, we assume that $n_i\geq 2$ for each $i=1,2,\dots,k$. Again, for simplicity and clarity, we only consider the case $k=2$, and the same is also true for $k\neq 2$.

We further divide $\bR^{n_1}\times\bR^{n_2}$ into 5 areas by using two general rectangles, that is,
\begin{gather*}
\prod_{i=1}^{2}B(0, a_i+\sqrt{n_i}s)
\end{gather*}
and
\begin{gather*}
\prod_{i=1}^{2}B(0,b_i+\sqrt{n_i}s),
\end{gather*}
where $B(0, a_i+\sqrt{n_i}s),\,B(0,b_i+\sqrt{n_i}s)\subset\bR^{n_i}$ with $i=1,2$. Analogously, for the general case $k\neq 2$, $\bR^{n_1}\times\bR^{n_2}\times \cdots \bR^{n_k}$ will be partitioned into $2^n+1$ parts by
two general rectangles, namely,
\begin{gather*}
\prod_{i=1}^{k}B(0, a_i+\sqrt{n_i}s)
\end{gather*}
and
\begin{gather*}
\prod_{i=1}^{k}B(0,b_i+\sqrt{n_i}s),
\end{gather*}
where $B(0, a_i+\sqrt{n_i}s),\,B(0,b_i+\sqrt{n_i}s)\subset\bR^{n_i}$ with $i=1,2,\dots,k$.

	Set
	\begin{equation*}
		u(x)=\left\{\begin{aligned}
			&\frac{\|g\|_{1}}{{V_{n_1}V_{n_2}a_1^{n_1}a_2^{n_2}}}&&\textrm{if }x\in  \prod_{i=1}^2B(0, a_i+\sqrt{n_i}s),\\
			&\frac {\|g\|_{1}}{V_{n_1}V_{n_2}a_2^{n_2}(|x_1|-\sqrt{n_1}s)^{n_1}}&&\textrm{if }x\in \left(B(0,b_1+\sqrt{n_1}s)\backslash B(0, a_1+\sqrt{n_1}s)\right)\times B(0, a_2+\sqrt{n_2}s),\\
			&\frac {\|g\|_{1}}{V_{n_1}V_{n_2}a_1^{n_1}(|x_2|-\sqrt{n_2}s)^{n_2}}&&\textrm{if }x\in B(0, a_1+\sqrt{n_1}s)\times (B(0,b_2+\sqrt{n_2}s)\backslash B(0, a_2+\sqrt{n_2}s)),\\
			&\frac {\|g\|_{1}}{\prod_{i=1}^2V_{n_i}(|x_i|-\sqrt{n_i}s)^{n_i}}&&\textrm{if }x\in\prod\limits _{i=1}^2(B(0,b_i+\sqrt{n_i}s)\backslash B(0, a_i+\sqrt{n_i}s)),\\
			&0& &\textrm{if }x\in \left(\prod_{i=1}^2B(0,b_i+\sqrt{n_i}s)\right)^c.
		\end{aligned}
		\right.
	\end{equation*}
It follows from a direct calculation that
\begin{gather}
	\label{Ge1.2}
	\l\|\tilde M_{\boldsymbol{a}}^{\boldsymbol{b}}\varphi\r\|_{1}\le \l\|\tilde M_{\boldsymbol{a}}^{\boldsymbol{b}}g\r\|_{1}\leq \|u\|_1,
\end{gather}
and
\begin{equation}
\begin{split}
\label{Ge1.1}
 \|u\|_{1}=\|g\|_1&\Bigg(\prod_{i=1}^{2}\frac {(a_i+\sqrt{n_i}s)^{n_i}}{a_i^{n_i}}+\frac {(a_2+\sqrt{n_2}s)^{n_2}}{a_2^{n_2}}n_1\left(\ln\frac {b_1}{a_1}+A_1(s)\right)\\
 &+\frac {(a_1+\sqrt{n_1}s)^{n_1}}{a_1^{n_1}}n_2\left(\ln\frac {b_2}{a_2}+A_2(s)\right)+\prod_{i=1}^2n_i\left(\ln\frac {b_i}{a_i}+A_i(s)\right)\Bigg),
 \end{split}
\end{equation}
where
\begin{gather*}
A_i(s)=\sum \limits_{j=0}^{n_i-2}\left(C_{n_i-1}^{j} \frac {(\sqrt{n_i}s)^{n_i-j-1}}{(j+1-n_i)b_i^{n_i-j-1}}-C_{n_i-1}^{j} \frac {(\sqrt{n_i}s)^{n_i-j-1}}{(j+1-n_i)a_i^{n_i-j-1}}\right)
\end{gather*}
for $i=1,2$.

Let $s\to 0$ in \eqref{Ge1.1} and take account of \eqref{Ge1.2}, we obtain that
\begin{align*}
	\l\|\tilde M_{\boldsymbol{a}}^{\boldsymbol{b}}\varphi\r\|_{1}&\leq (1+\varepsilon)\left(1+\sum_{i=1}^{2} n_i\ln\frac {b_i}{a_i}+\prod\limits _{i=1}^2n_i\ln\frac{b_i}{a_i}\right)\\
	&=(1+\varepsilon)\prod\limits_{i=1}^{2}\left(1+n_i\ln\frac {b_i}{a_i}\right).
\end{align*}
Then the arbitrariness of $\varepsilon>0$ and $\varphi\in C_c(\bR^n)$ with $\|\varphi\|_1=1$ give that
\begin{gather}
\label{Ge1.3}
\l\|\tilde M_{\boldsymbol{a}}^{\boldsymbol{b}}\r\|_{L^1}\leq \prod\limits_{i=1}^{2}\left(1+n_i\ln\frac {b_i}{a_i}\right).
\end{gather}

Conversely, inspired by the shape of function $u$, we construct again a sequence of functions to deduce the inverse inequality.

Define
\begin{gather*}
f_m:=\frac {\chi_{B(0,1/m)}}{V_nm^{-n}},
\end{gather*}
for $m=1,2,3,\dots$. Obviously,  $\|f_{m}\|_{1}=1$.

For $m$ large enough, take
	\begin{gather*}
	w(x)=\left\{\begin{aligned}
			&\frac{1}{{V_{n_1}V_{n_2}a_1^{n_1}a_2^{n_2}}}&&\textrm{if }x\in  \prod_{i=1}^2B(0, a_i-1/m),\\
			&\frac {1}{V_{n_1}V_{n_2}a_2^{n_2}(|x_1|+1/m)^{n_1}}&&\textrm{if }x\in \left(B(0,b_1-1/m)\backslash B(0, a_1-1/m)\right)\times B(0, a_2-1/m),\\
			&\frac {1}{V_{n_1}V_{n_2}a_1^{n_1}(|x_2|+1/m)^{n_2}}&&\textrm{if }x\in B(0, a_1-1/m)\times (B(0,b_2-1/m)\backslash B(0, a_2-1/m)),\\
			&\frac {1}{\prod_{i=1}^2V_{n_i}(|x_i|+1/m)^{n_i}} &&\textrm{if }x\in\prod\limits _{i=1}^2(B(0,b_i-1/m)\backslash B(0, a_i-1/m)),\\
			&0 & &\textrm{if }x\in \left(\prod_{i=1}^2B(0,b_i-1/m)\right)^c.
		\end{aligned}
		\right.
	\end{gather*}
A direct calculation yields that
\begin{gather}
\label{Ge1.4}
\l\|\tilde M_{\boldsymbol{a}}^{\boldsymbol{b}}\r\|_{L^1}\geq \l\|\tilde M_{\boldsymbol{a}}^{\boldsymbol{b}}f_m\r\|_{1}\geq \|w\|_1,
\end{gather} 	
and
\begin{equation}
\label{Ge1.5}
\begin{split}
 \|w\|_{1}=&\prod_{i=1}^{2}\frac {(a_i-1/m)^{n_i}}{a_i^{n_i}}+\frac {(a_2-1/m)^{n_2}}{a_2^{n_2}}n_1\left(\ln\frac {b_1}{a_1}+B_1(m)\right)\\
 &+\frac {(a_1-1/m)^{n_1}}{a_1^{n_1}}n_2\left(\ln\frac {b_2}{a_2}+B_2(m)\right)+\prod_{i=1}^2n_i\left(\ln\frac {b_i}{a_i}+B_i(m)\right),
 \end{split}
\end{equation}
where
\begin{gather*}
B_i(m)=\sum \limits_{j=0}^{n_i-2}\left(C_{n_i-1}^{j} \frac {(-1/m)^{n_i-j-1}}{(j+1-n_i)b_i^{n_i-j-1}}-C_{n_i-1}^{j} \frac {(-1/m)^{n_i-j-1}}{(j+1-n_i)a_i^{n_i-j-1}}\right)
\end{gather*}
for $i=1,2$.

Consider \eqref{Ge1.4}, and let $m\to\infty$ in \eqref{Ge1.5}. We then deduce that
\begin{gather}
\label{newaddl2}
\l\|\tilde M_{\boldsymbol{a}}^{\boldsymbol{b}}\r\|_{L^1}\geq \prod\limits_{i=1}^{2}\left(1+n_i\ln\frac {b_i}{a_i}\right).
\end{gather}
Finally, \eqref{Ge1.3} and \eqref{newaddl2} give that
\begin{gather*}
\l\|\tilde M_{\boldsymbol{a}}^{\boldsymbol{b}}\r\|_{L^1}= \prod\limits_{i=1}^{2}\left(1+n_i\ln\frac {b_i}{a_i}\right),
\end{gather*}
which completes the proof of Part 2, and hence, the whole proof is finished.	
\end{proof}

\section*{Acknowledgements}
This work is supported by the National Natural Science Foundation of China (Grant No.s\,11871452 and 12071052), the Natural Science Foundation of
Henan (Grant No.\,202300410338), and the Nanhu Scholar Program for Young Scholars of XYNU.

\end{document}